\newtheorem{thm}{Theorem}[section]
\newtheorem{cor}[thm]{Corollary}
\newtheorem{lem}[thm]{Lemma}
\theoremstyle{definition}
\theoremstyle{remark}
\newtheorem{rem}[thm]{Remark}
\numberwithin{equation}{section}
\begin{document}

\title[Upper bounds for the Steklov eigenvalues of the $p$-Laplacian]{Upper bounds for the Steklov eigenvalues of the $p$-Laplacian}%
\author{Luigi Provenzano}%
\address{Luigi Provenzano, Sapienza Universit\`a di Roma, Dipartimento di Scienze di Base e Applicate per l'Ingegneria, Via Antonio Scarpa 16, 00161 Roma, Italy}%
\email{luigi.provenzano@uniroma1.it}%
\thanks{}%
\subjclass[2020]{35P15; 35P30, 58J50}%
\keywords{Steklov problem, $p$-Laplacian, eigenvalue bounds, $(n-1)$-distortion}

\date{}%
\begin{abstract}
In this note we present upper bounds for the variational eigenvalues of the Steklov $p$-Laplacian on domains of $\mathbb R^n$, $n\geq 2$. We show that for $1<p\leq n$ the variational eigenvalues $\sigma_{p,k}$ are bounded above in terms of $k,p,n$ and $|\partial\Omega|$ only. In the case $p>n$ upper bounds depend on a geometric constant $D(\Omega)$, the {\it $(n-1)$-distortion} of $\Omega$ which quantifies the concentration of the boundary measure. We prove that the presence of this constant is necessary in the upper estimates for $p>n$ and that the corresponding inequality is sharp, providing examples of domains with boundary measure uniformly bounded away from zero and infinity and arbitrarily large variational eigenvalues.
\end{abstract}
\maketitle




\section{Introduction and statement of the main results}\label{intro}

Let $\Omega$ be a bounded domain (i.e., an open connected set) in $\mathbb R^n$, $n\geq 2$, with Lipschitz boundary $\partial\Omega$, and let $p>1$. We consider the following Steklov eigenvalue problem

\begin{equation}\label{pLap}
\begin{cases}
\Delta_pu=0\,, & {\rm in\ }\Omega,\\
|\nabla u|^{p-2}\frac{\partial u}{\partial\nu}=\sigma|u|^{p-2}u\,, & {\rm on\ }\partial\Omega,
\end{cases}
\end{equation}
where $\Delta_p={\rm div}(|\nabla u|^{p-2}\nabla u)$ is the $p$-Laplacian and $\frac{\partial u}{\partial\nu}$ is the outer normal derivative of $u$. When $p=2$ problem \eqref{pLap} is the classical Steklov problem for the Laplacian, introduced in \cite{steklov}. Problem \eqref{pLap} admits an increasing sequence of  non-negative eigenvalues, called the {\it variational eigenvalues}, diverging to $+\infty$:
$$
0=\sigma_{p,1}<\sigma_{p,2}\leq\sigma_{p,3}\leq\cdots\leq\sigma_{p,k}\leq\cdots\nearrow+\infty.
$$
A characterization of the variational eigenvalues is given by \eqref{minmax}. It it not known if the variational eigenvalues exhaust the spectrum, except in the case $p=2$.

The aim of the present note is to provide geometric upper bounds for the variational eigenvalues of problem \eqref{pLap}. Actually, being the eigenvalues not scaling invariant, we will write the upper bounds for the normalized eigenvalues, namely for $|\partial\Omega|^{\frac{p-1}{n-1}}\sigma_{p,k}$, where $|\partial\Omega|$ denotes the $(n-1)$-dimensional Hausdorff measure of $\partial\Omega$.

Upper bounds for the Steklov eigenvalues of the Laplacian have been quite extensively investigated in recent years. We recall that for a bounded Lipschitz domain of $\mathbb R^n$ the following bound holds (see \cite{colboisgirouard_steklov})
\begin{equation}\label{colbois}
|\partial\Omega|^{\frac{1}{n-1}}\sigma_{2,k}\leq\frac{C(n)}{I(\Omega)^{\frac{n-2}{n-1}}}k^{\frac{2}{n}},
\end{equation}
where $C(n)>0$ depends only on $n$ and $I(\Omega)$ denotes the isoperimetric ratio of $\Omega$ (see \eqref{iso_ratio} for the definition). In view of the Weyl's law
\begin{equation}\label{weyl_stek_2}
\lim_{k\rightarrow+\infty}\frac{|\partial\Omega|^{\frac{1}{n-1}}\sigma_{2,k}}{k^{\frac{1}{n-1}}}=\frac{2\pi}{\omega_{n-1}^{\frac{1}{n-1}}},
\end{equation}
we note that bound \eqref{colbois} does not show the expected behavior with respect to $k$, except for $n=2$. Here, by $\omega_n$ we denote the volume of the unit ball in $\mathbb R^n$. We remark that \eqref{weyl_stek_2} holds true when $\Omega$ is a piecewise $C^1$, Lipschitz domain. However, as highlighted in \cite{colboisgirouard_steklov}, a bound of the form \eqref{colbois} involving $I(\Omega)$ is not possible with a different power of $k$. Note also that \eqref{colbois} implies an upper bound on $\sigma_{2,k}$ of the form $|\partial\Omega|^{\frac{1}{n-1}}\sigma_{2,k}\leq C'(n)k^{\frac{2}{n}}$ for some constant $C'(n)>0$ depending only on $n$. Proving upper bounds of this type but with the correct exponent $\frac{1}{n-1}$ for the eigenvalue number $k$ is still an open question (except again for $n=2$). Partial results in this direction are available in \cite{stubbe_provenzano} (see also \cite{colbois_girouard_gittins} for upper bounds in the case of hypersurfaces of revolution in $\mathbb R^n$ and \cite{colbois_gittins} for upper bounds via the intersection index). We also refer to \cite{hassan} for upper bounds for the Steklov eigenvalues of the Laplacian in the conformal class of a given metric for domains in complete Riemannian manifolds.

As for the variational eigenvalues of the Steklov $p$-Laplacian, a Weyl's asymptotic law has not been established (up to our knowledge). We recall that the validity of a Weyl's law for the variational eigenvalues $\lambda_{p,k}$ of the $p$-Laplacian with Dirichlet boundary conditions on $\Omega$ of the form
\begin{equation}\label{weyl_dir}
\lim_{k\rightarrow+\infty}\frac{|\Omega|^{\frac{p}{n}}\lambda_{p,k}}{k^{\frac{p}{n}}}=C_D(p,n),
\end{equation}
with $C_D(p,n)>0$ depending only on $p$ and $n$, has been conjectured by Friedlander in \cite{friedlander_p}, who proved asymptotic upper and lower bounds for $\frac{|\Omega|^{\frac{p}{n}}\lambda_{p,k}}{k^{\frac{p}{n}}}$. The conjecture seems to have been proved recently in \cite{mazurowski}. The same discussion holds for the Neumann eigenvalues of the $p$-Laplacian. As for the Steklov eigenvalues, it in natural to conjecture that
\begin{equation}\label{weyl_stek}
\lim_{k\rightarrow+\infty}\frac{|\partial\Omega|^{\frac{p-1}{n-1}}\sigma_{p,k}}{k^{\frac{p-1}{n-1}}}=C_S(p,n),
\end{equation}
with $C_S(p,n)>0$ depending only on $p$ and $n$. Asymptotic estimates (i.e., holding for $k\geq k_{\Omega}$) in the spirit of Friedlander have been established in \cite{pinasco}.

It is reasonable to expect that upper bounds of the form \eqref{colbois} hold also for $|\partial\Omega|^{\frac{p-1}{n-1}}\sigma_{p,k}$. However, quite surprisingly, this happens only when $p\leq n$. On the other hand, for $p>n$ we show that upper bounds of the form \eqref{colbois} do not hold in general. In Section \ref{counter} we provide examples of domains $\Omega_j$ such that $|\partial\Omega_j|$ remains uniformly bounded away from zero and infinity as $j\rightarrow+\infty$, but $\lim_{j\rightarrow+\infty}\sigma_{p,2}=+\infty$. When $p>n$ we are able in any case to provide upper bounds that depend on a geometric quantity $D(\Omega)$ which we call the $(n-1)$-distortion of $\Omega$ (see \eqref{n_disto} for the definition).

We state now our main result.

\begin{thm}\label{main}
Let $\Omega$ be a bounded domain of $\mathbb R^n$ with Lipschitz boundary. Then
\begin{equation}\label{ineq_sup}
|\partial\Omega|^{\frac{p-1}{n-1}}\sigma_{p,k}\leq \frac{C_{p,n}}{I(\Omega)^{\frac{n-p}{n-1}}}k^{\frac{p}{n}}\,,\ \ \ {\rm if\ }p\leq n,
\end{equation}
where $I(\Omega)$ is the isoperimetric ratio of $\Omega$, namely
\begin{equation}\label{iso_ratio}
I(\Omega):=\frac{|\partial\Omega|}{|\Omega|^{\frac{n-1}{n}}}.
\end{equation}
Moreover
\begin{equation}\label{ineq_sub}
|\partial\Omega|^{\frac{p-1}{n-1}}\sigma_{p,k}\leq C_{p,n}'D(\Omega)^{\frac{p-n}{n-1}}k^{\frac{p-1}{n-1}}\,,\ \ \ {\rm if\ }p>n,
\end{equation}
where $D(\Omega)$ is the $(n-1)$-distortion of $\Omega$, namely
\begin{equation}\label{n_disto}
D(\Omega):=\sup_{x\in\mathbb R^n, r>0}\frac{|\partial\Omega\cap B(x,r)|}{\omega_{n-1}r^{n-1}}.
\end{equation}
The positive constants $C_{p,n},C_{p,n}'$ depend only on $p$ and $n$.
\end{thm}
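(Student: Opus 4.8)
The plan is to bound $\sigma_{p,k}$ from above via the min-max characterization \eqref{minmax} by exhibiting $k$ admissible test functions with pairwise disjoint supports. I would use the standard disjoint-supports principle: if $u_1,\dots,u_k\in W^{1,p}(\Omega)$ have mutually disjoint supports and $\int_{\partial\Omega}|u_i|^p>0$ for every $i$, then on the linear span of the $u_i$ both the Dirichlet energy and the boundary $L^p$-norm split as sums over $i$, so the unit sphere of this span for the $L^p(\partial\Omega)$-norm is a set of Krasnoselskii genus $k$ on which the Rayleigh quotient never exceeds $\max_i R(u_i)$; by \eqref{minmax} this yields $\sigma_{p,k}\le\max_i R(u_i)$. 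Thus the whole problem reduces to building, for each $k$, such a family with controlled Rayleigh quotients.

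For the construction I would invoke the Grigor'yan--Netrusov--Yau metric-measure decomposition. The space $(\mathbb{R}^n,|\cdot|)$ has the property that every ball of radius $\rho$ is covered by $N(n)$ balls of radius $\rho/2$, and $\mu:=$ the restriction of $\mathcal{H}^{n-1}$ to $\partial\Omega$ is a finite, non-atomic Borel measure with $\mu(\mathbb{R}^n)=|\partial\Omega|$; hence, for every $m$, the GNY theorem produces $m$ ``annuli'' $A_1,\dots,A_m$ — each a ball or a spherical shell of controlled eccentricity, with inner scale $r_i$ and with $2A_i$ of outer radius at most $C_n r_i$ — such that the dilates $2A_i$ are pairwise disjoint and $\mu(A_i)\ge|\partial\Omega|/(c_n m)$. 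Taking $u_i$ equal to $1$ on $A_i$, $0$ off $2A_i$, interpolated via a rescaled distance function (so $|\nabla u_i|\le C_n/r_i$), the $u_i$ are disjointly supported, $\int_{\partial\Omega}|u_i|^p\ge\mu(A_i)$, and
\begin{equation*}
\int_\Omega|\nabla u_i|^p\le\Big(\frac{C_n}{r_i}\Big)^p\,\bigl|\Omega\cap 2A_i\bigr|\le\Big(\frac{C_n}{r_i}\Big)^p\min\{\,|\Omega|,\;C_n r_i^{n}\,\}.
\end{equation*}
The key difficulty is that the decomposition gives no control on the scales $r_i$, so this last estimate must be exploited with no information on $r_i$, and this is exactly what separates the regime $p\le n$ from $p>n$.

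For $p\le n$, the map $\rho\mapsto\min\{|\Omega|,C_n\rho^n\}\rho^{-p}$ is bounded, but only by $C_{p,n}|\Omega|^{(n-p)/n}$; fed into the above this gives merely $\sigma_{p,k}\le C_{p,n}|\Omega|^{(n-p)/n}k/|\partial\Omega|$ — the right geometry but the wrong power of $k$. To recover $k^{p/n}$ I would use that the $2A_i$ are pairwise disjoint, so $\sum_i|\Omega\cap 2A_i|\le|\Omega|$: running the decomposition with $m=2k$ and discarding the at most $k$ indices with $|\Omega\cap 2A_i|>|\Omega|/k$ leaves $k$ capacitors with $|\Omega\cap 2A_i|\le|\Omega|/k$, and then balancing $\min\{|\Omega|/k,\,C_n r_i^{n}\}r_i^{-p}\le C_{p,n}(|\Omega|/k)^{(n-p)/n}$ over $r_i$ yields $\int_\Omega|\nabla u_i|^p\le C_{p,n}(|\Omega|/k)^{(n-p)/n}$; dividing by $\int_{\partial\Omega}|u_i|^p\ge|\partial\Omega|/(2c_n k)$ gives $R(u_i)\le C_{p,n}|\Omega|^{(n-p)/n}k^{p/n}/|\partial\Omega|$, which after multiplying by $|\partial\Omega|^{(p-1)/(n-1)}$ and recognising $|\Omega|^{(n-p)/n}/|\partial\Omega|^{(n-p)/(n-1)}=I(\Omega)^{-(n-p)/(n-1)}$ is exactly \eqref{ineq_sup}.

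For $p>n$ this last trick fails: both $|\Omega|\rho^{-p}$ and $C_n\rho^{n-p}$ decrease in $\rho$, so small capacitors are genuinely costly and no volume/pigeonhole argument saves them — the analytic reflection of the examples of Section \ref{counter}. Here I would instead use the finiteness of the $(n-1)$-distortion: by \eqref{n_disto}, $\mu(A_i)\le\mu(B(x_i,C_n r_i))\le D(\Omega)\,\omega_{n-1}(C_n r_i)^{n-1}$, which together with $\mu(A_i)\ge|\partial\Omega|/(c_n k)$ forces $r_i\ge c_n'\bigl(|\partial\Omega|/(k\,D(\Omega))\bigr)^{1/(n-1)}$. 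Using $|\Omega\cap 2A_i|\le C_n r_i^{n}$ and $n-p<0$, this gives $\int_\Omega|\nabla u_i|^p\le C_n r_i^{n-p}\le C_{p,n}\bigl(|\partial\Omega|/(k\,D(\Omega))\bigr)^{(n-p)/(n-1)}$, and dividing by $\int_{\partial\Omega}|u_i|^p\ge|\partial\Omega|/(c_n k)$ and collecting exponents yields \eqref{ineq_sub}. Beyond these two scale dichotomies — the main conceptual hurdle being the extra pigeonhole/balancing needed for the sharp power of $k$ when $p\le n$, and the unavoidability of $D(\Omega)$ when $p>n$ — the parts I expect to require care are the precise invocation of the GNY decomposition (its hypotheses, the non-atomicity, the fat-annulus normalisation), the degenerate bookkeeping (small $k$, or $\Omega$ and $\partial\Omega$ extremely ``thin''), and making the genus-based min-max argument for the nonlinear eigenvalues fully rigorous.
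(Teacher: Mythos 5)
Your overall strategy is the same as the paper's: decompose the boundary measure via the Grigor'yan--Netrusov--Yau theorem applied to $(\mathbb R^n,|\cdot|,\mathcal H^{n-1}\llcorner\partial\Omega)$, take $2k$ annuli and discard by pigeonhole the half with $|2A_i\cap\Omega|>|\Omega|/k$, build plateau functions $u_i$ ($\equiv 1$ on $A_i$, supported in $2A_i$), pass to a genus-$k$ test set of disjointly supported combinations, and estimate the Rayleigh quotients differently in the two regimes. Your $p\le n$ balancing and your derivation of the isoperimetric factor match inequality \eqref{ineq_sup}, and the genus argument is the paper's Lemma \ref{disjoint_lem}.

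However, there is a genuine gap in the case $p>n$: you attribute to the GNY decomposition a bounded-eccentricity property (``each $A_i$ is a ball or a shell of controlled eccentricity, with $2A_i$ of outer radius at most $C_n r_i$''), and your crucial lower bound on the inner radius rests on it, via $\mu(A_i)\le\mu(B(x_i,C_nr_i))\le D(\Omega)\omega_{n-1}(C_nr_i)^{n-1}$. Theorem \ref{gny} gives no such control: an annulus $A(a_i,r_i,R_i)$ may have $R_i\gg r_i$, with all of its $\mu$-measure sitting near the outer sphere, so the measure lower bound $\mu(A_i)\ge c_n|\partial\Omega|/(2k)$ by itself says nothing about $r_i$ — and for $p>n$ the inner transition shell contributes $\sim r_i^{n-p}$ to the Dirichlet energy, which blows up as $r_i\to0$. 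The missing ingredient is exactly the paper's Corollary \ref{corollarygny0} (Remark 3.13 in the GNY paper), an additional property of the construction stating that each $A_i$ is either a ball of radius $r_i$ or an annulus whose \emph{inner} radius satisfies $r_i\ge\frac12\inf\{r:V(r)\ge v_k\}$ with $V(r)=\sup_x\mu(B(x,r))$; combined with $V(r)\le D(\Omega)\omega_{n-1}r^{n-1}$ this yields precisely the bound $r_i\gtrsim\bigl(|\partial\Omega|/(kD(\Omega))\bigr)^{1/(n-1)}$ that you wanted, after which your computation closes \eqref{ineq_sub}. The same unproved normalisation also slips into your $p\le n$ estimate through the factor $\min\{\cdot\,,C_nr_i^n\}$ (the support of $\nabla u_i$ has volume of order $R_i^n$, not $r_i^n$), but there the repair is harmless: either estimate the inner and outer transition shells separately with their own radii, or argue as the paper does, bounding $\int_\Omega|\nabla u_i|^n$ by a dimensional constant (scale-invariantly, regardless of eccentricity) and applying H\"older with $|2A_i\cap\Omega|\le|\Omega|/k$.
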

We note that $D(\Omega)$ is a well-defined quantity for a bounded Lipschitz domain $\Omega$, in fact we have that $\lim_{r\rightarrow 0^+}\frac{|\partial\Omega\cap B(x,r)|}{\omega_{n-1}r^{n-1}}=0$ when $x\not\in\partial\Omega$, $\lim_{r\rightarrow+\infty}\frac{|\partial\Omega\cap B(x,r)|}{\omega_{n-1}r^{n-1}}=0$ for all $x\in\mathbb R^n$, and $\lim_{r\rightarrow 0^+}\frac{|\partial\Omega\cap B(x,r)|}{\omega_{n-1}r^{n-1}}=C(x)$ for $x\in\partial\Omega$, with $0<c\leq C(x)\leq C <+\infty$ ($C(x)=1$ for all $x\in\partial\Omega$ if $\Omega$ is of class $C^1$).

We discuss now Theorem \ref{main}.

\begin{rem}[On the case $p\leq n$]
When $p\leq n$, we note that inequality \eqref{ineq_sup} implies that a large isoperimetric ratio forces the normalized eigenvalues $|\partial\Omega|^{\frac{p-1}{n-1}}\sigma_{p,k}$ to be small when $p<n$. This is in general not true for $p=n$, at least for $p=2$ (see \cite[Theorem 4]{colbois_discr} where the authors provide an example of planar domains with large isoperimetric ratio and normalized Steklov eigenvalues bounded away from zero). We also remark that the proof of \eqref{ineq_sup} can be performed in the same way if we substitute the ambient space $\mathbb R^n$ with a complete $n$-dimensional Riemannian manifold $(M,g)$ satisfying a suitable packing property (namely, the hypothesis of Theorem \ref{gny}, see also \cite[Theorem 2.2]{colboisgirouard_steklov} for $p=2$). In particular, this is true, e.g., if $(M,g)$ has non-negative Ricci curvature. In this setting it is easier to show that we can have an arbitrarily large isoperimetric ratio and Steklov eigenvalues bounded away from zero when $p=n$. In fact, let $\Omega$ be a bounded domain in $(M,g)$, a complete $p$-dimensional Riemannian manifold as above. Let us take a conformal metric $g'=e^wg$ with $w\equiv 0$ in a neighborhood of $\partial\Omega$. The operator $\Delta_p$ is conformally covariant (recall that $p$ coincides with the space dimension), thus functions which are $p$-harmonic (i.e., with zero $p$-Laplacian) with respect to $g$ are $p$-harmonic with respect to $g'$ and vice-versa. Moreover the gradient and the normal derivative of functions along the boundary are preserved, being $g=g'$ in a neighborhood of the boundary. Therefore the Steklov  eigenvalues on $\Omega$ with respect to $g$ and $g'$ coincide. Also the measure of $\partial\Omega$ is preserved. It is sufficient then to find a function $w$ such that the volume of $\Omega$ with respect to the Lebesgue measure associated with $g'$ becomes arbitrarily small. This is done by taking some $w$ decaying rapidly to $-C$ away from $\partial\Omega$, where $C>0$ is arbitrarily large. Doing so, we obtain a very large isoperimetric ratio for $\Omega$ (in $(M,g')$), while the Steklov eigenvalues remain unchanged.
\end{rem}

\begin{rem}[On the case $p> n$]
We note that the quantity $D(\Omega)$ quantifies the concentration of the $(n-1)$-dimensional measure of $\partial\Omega$ in small regions of $\mathbb R^n$. Usually, upper bounds for the eigenvalues of Steklov-type and Neumann-type problems are not affected by the particular geometry of the domain (for the Steklov Laplacian they depend only on $k,n$ and $|\partial\Omega|$, for the Neumann Laplacian they depend only on $k,n$ and $|\Omega|$, etc.). Therefore at a first sight the geometric constant $D(\Omega)$ may result odd and unnecessary. In the case of inequality \eqref{ineq_sub} we prove in Section \ref{counter} that the constant $D(\Omega)$ is instead necessary in an upper estimate for $\sigma_{p,k}$ when $p>n$, providing a sequence of domains $\Omega_j$, $j\in\mathbb N$, with $|\partial\Omega_j|$ uniformly bounded away from zero and infinity, $\sigma_{p,2}\geq C(p,n)j^{p-n}$ as $j\rightarrow+\infty$ for some constant $C(p,n)>0$ only depending on $p$ and $n$, and with $D(\Omega_j)\sim j^{n-1}$ as $j\rightarrow+\infty$. The example not only proves the necessity of $D(\Omega)$ in \eqref{ineq_sub}, but also shows the sharpness of the exponent of $D(\Omega)$ (see Theorem \ref{thm_counter}).

We also remark that the quantity  $D(\Omega)$ already appears in some sense in the celebrated paper \cite{dancer_daners} where the authors identify a condition on the perturbations of a domain $\Omega$ under which Robin boundary conditions for the Laplacian degenerate to Dirichlet conditions at the limit. Roughly speaking, this happens when the surface measure goes locally to infinity. This is somehow equivalent to the condition that $D(\Omega)\rightarrow+\infty$. This condition, expressed in a different way, also appears in the study of Steklov-type eigenvalue problems and boundary value problems for the Laplacian on domains with very rapidly oscillating boundaries (see \cite{arrieta_bruschi_1,ferrero_lamberti,ferrero_lamberti_2}), where it implies spectral instability and degeneration of the limit problem (which, in the case of the classical Steklov problem, amounts to saying that all the eigenvalues converge to zero).  The same results in the spirit of \cite{ferrero_lamberti,ferrero_lamberti_2} are very likely to hold in the case of the Steklov $p$-Laplacian when $p\leq n$. On the other hand, in this note we observe a somehow opposite behavior for $p>n$. In fact, as already mentioned, the domains provided in Section \ref{counter} have arbitrarily large distortion and correspondingly arbitrarily large Steklov eigenvalues. Moreover, by suitably rescaling the domains, we may also assume that the boundary measure becomes arbitrarily large, along with the distortion, and still the eigenvalues remain uniformly bounded away from zero (see Remark \ref{rem_counter}).

Concerning $D(\Omega)$, we should also mention the recent paper \cite{colbois_gittins} where upper bounds for the Steklov eigenvalues of the Laplacian in terms of the intersection index  and the injectivity radius of the boundary have been obtained. In some sense, these two quantities together play the same role of $D(\Omega)$ in describing how the boundary measure accumulate. 

Finally, we mention that a behavior similar to that of our case $p>n$ has been observed for upper bounds on the Neumann eigenvalues of linear elliptic operators of order $2m$, $m\in\mathbb N$ and density on Euclidean domains (see \cite{colbois_provenzano}) and for upper bounds on Neumann eigenvalues of the $p$-Laplacian in the conformal class of a given metric in a complete Riemannian manifold (see \cite{colbois_provenzano_3}). 
\end{rem}

\begin{rem}
In view of the conjectured Weyl's law \eqref{weyl_stek}, for $p>n$ the upper bounds present the correct behavior with respect to $k$. This is somehow expected and natural since the power $k^{\frac{p}{n}}$ (which we have in the bounds for $p\leq n$) is not compatible with \eqref{weyl_stek} when $p>n$: in fact $\frac{p}{n}<\frac{p-1}{n-1}$ when $p>n$. In the case of convex domains we have that $D(\Omega)\leq \frac{n\omega_n}{\omega_{n-1}}$. Thus, when $p>n$ we have Weyl-type upper bounds for the eigenvalues (see Corollary \ref{cor_convex}).
\end{rem}

\begin{rem}[Lower bounds]
As for lower bounds, it is possible to build, for any $p>1$, a sequence of domains $\Omega_{\varepsilon}$, $\varepsilon\in(0,1)$, of fixed volume and such that $\sigma_{p,2}(\Omega_{\varepsilon})\rightarrow 0$ as $\varepsilon\rightarrow 0^+$. The construction is standard for $p=2$ (see e.g., \cite[Chapter III]{chavel}). However, for any $p>1$ it can be reproduced with no essential modifications. Namely, one considers a sequence of dumbbell domains $\Omega_{\varepsilon}={\rm Int}(\overline\Omega_1\cup\overline\Omega_2\cup\overline{\omega_{\varepsilon}})$, provided that the union is connected. Here $\Omega_1,\Omega_2$ are two disjoint bounded domains, $\omega_{\varepsilon}\sim(0,L)\times B_{\varepsilon}$, where $L>0$ and $B_{\varepsilon}$ is a ball of radius $\varepsilon$ in $\mathbb R^{n-1}$, and ${\rm Int}$ denotes the interior.  It is sufficient consider the variational characterization \eqref{minmax} of $\sigma_{2,p}$ and use as a set of test functions the set $F=\{\alpha_1u_1+\alpha_2 u_2:\alpha_1,\alpha_2\in\mathbb R, |\alpha_1|^p+|\alpha_2|^p=1\}$, where $u_i$, $i=1,2$, are functions in $W^{1,p}(\Omega)$ with $\|u_i\|_{L^p(\partial\Omega)}=1$, $u_i\equiv c_i\ne 0$ in $\Omega_i$, $u_1,u_2$ disjointly supported. It is not hard to build such test functions and use them as in the proof of Theorem \ref{main} to obtain $\sigma_{p,2}(\Omega_{\varepsilon})\leq C\varepsilon^{n-1}$ for some $C>0$ independent on $\varepsilon$.

\end{rem}

In order to prove Theorem \ref{main} we will use an approach based on a metric construction (see \cite{gny}, see also \cite{colboisgirouard_steklov}). Namely, in order to bound $\sigma_{p,k}$ we consider $A_1,...,A_k$ disjoints subsets of $\Omega$ of measure of the order of $\frac{|\Omega|}{k}$, and with $|\partial\Omega\cap \overline A_k|$ of the order of $\frac{|\partial\Omega|}{k}$, and introduce test functions $u_1,...,u_k$ subordinated to these sets. A clever estimate of the Rayleigh quotient of these functions provides the upper bounds of Theorem \ref{main}.

The paper is organized as follows. In Section \ref{pre} we set the notation and recall some preliminary results. In Section \ref{proofs} we prove Theorem \ref{main}. In Section \ref{counter} we provide the examples of domains showing the necessity of the geometric constant $D(\Omega)$ in \eqref{ineq_sub} and the sharpness of the exponent.




\section{Preliminaries and notation}\label{pre}

By $W^{1,p}(\Omega)$ we denote the Sobolev space of functions $u\in L^p(\Omega)$ with weak first derivatives in $L^p(\Omega)$. The space $W^{1,p}(\Omega)$ is endowed with the norm

\begin{equation}\label{Wnorm}
\|u\|_{W^{1,p}(\Omega)}^p:=\int_{\Omega}|\nabla u|^p+|u|^p dx,
\end{equation}
For $u\in L^p(\Omega)$ we denote by $\|u\|_{L^p(\Omega)}$ its standard norm given by
\begin{equation}\label{Lnorm}
\|u\|_{L^{p}(\Omega)}^p:=\int_{\Omega}|u|^p dx,
\end{equation}
while for $u\in L^p(\partial\Omega)$ we denote by $\|u\|_{L^p(\partial\Omega)}$ its standard norm given by
\begin{equation}\label{Lbnorm}
\|u\|_{L^{p}(\partial\Omega)}^p:=\int_{\partial\Omega}|u|^p d\sigma(x),
\end{equation}
where $d\sigma(x)$ denotes the $(n-1)$-dimensional measure element on $\partial\Omega$.

For a measurable set $E$ of $\mathbb R^n$ we denote by $|E|$ its Lebesgue measure. For subset $E$ of $\mathbb R^n$ which is measurable with respect to the $(n-1)$-dimensional Hausdorff measure, we shall still denote by $|E|$ its $(n-1)$-dimensional Hausdorff measure. Therefore, for an open set $\Omega$ of $\mathbb R^n$ with Lipschitz boundary, $|\Omega|$ shall denote its Lebesgue measure, while $|\partial\Omega|$ shall denote the $(n-1)$-dimensional measure of its boundary. By $\mathbb N$ we denote the set of positive integers.

Problem \eqref{pLap} is understood in the weak sense, namely a couple $(u,\sigma)\in W^{1,p}(\Omega)\times\mathbb R$ is a weak solution to \eqref{pLap} if and only if
\begin{equation}\label{pLap_weak}
\int_{\Omega}|\nabla u|^{p-2}\nabla u\cdot\nabla\phi dx=\sigma\int_{\partial\Omega}|u|^{p-2}u\phi d\sigma(x)\,,\ \ \ \forall\phi\in W^{1,p}(\Omega).
\end{equation}

A sequence of eigenvalues for \eqref{pLap_weak} can be obtained through the {\it Ljusternik-Schnirelman} principle (see \cite{bonder,garcia_azorero,an_le} for a more detailed discussion on the variational eigenvalues of problem \eqref{pLap}). These eigenvalues, which form an increasing sequence of non-negative numbers diverging to $+\infty$, are called the {\it variational eigenvalues} as they can be characterized variationally as follows:
\begin{equation}\label{minmax}
\sigma_{p,k}:=\inf_{F\in\Gamma_k}\sup_{u\in F}\mathcal R_p(u),
\end{equation}
where
\begin{equation}\label{Rayleigh}
\mathcal R_p(u):=\frac{\int_{\Omega}|\nabla u|^pdx}{\int_{\partial\Omega}|u|^pd\sigma(x)}
\end{equation}
is the Rayleigh quotient of $u$. Here

\begin{multline}\label{Gammak}
\Gamma_k:=\left\{F\subset W^{1,p}(\Omega)\setminus\left\{0\right\}\right.\\
\left.: F\cap\left\{u:\|u\|_{L^p(\partial\Omega)}=1 \right\}{\rm\ compact,\ }F{\rm\ symmetric,\ }\gamma(F)\geq k\right\},
\end{multline}

and $\gamma(F)$ denotes the {\it Krasnoselskii genus} of $F$, which is defined by

\begin{equation}\label{genus}
\gamma(F):=\min\left\{\ell\in \mathbb N:{\rm\ there\ exists\ }f:F\rightarrow\mathbb R^{\ell}\setminus\left\{0\right\}{\rm\ continuous\ and\ odd}\right\}.
\end{equation}

We refer to \cite{bonder} for the proof (see also \cite{garcia_azorero,an_le}).

In order to prove upper bounds for $\sigma_{p,k}$ we need suitable sets $F_k\in \Gamma_k$ to test in \eqref{minmax}. The following lemma provides us a useful way to build such $F_k$.

\begin{lem}\label{disjoint_lem}
Let $k\in\mathbb N$, $k\geq 1$, and let $u_1,...u_k\in W^{1,p}(\Omega)$, with $u_i\ne 0$ and with pairwise disjoint supports $U_1, ..., U_k$. Let
$$
F_k:=\left\{\sum_{i=1}^k\alpha_iu_i: \alpha_i\in\mathbb R, \sum_{i=1}^k|\alpha_i|^p=1\right\}.
$$
Then $F_k\in\Gamma_k$.
\end{lem}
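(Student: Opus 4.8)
The plan is to verify the three defining properties of membership in $\Gamma_k$ directly: that $F_k$ is a symmetric subset of $W^{1,p}(\Omega)\setminus\{0\}$, that $F_k\cap\{u:\|u\|_{L^p(\partial\Omega)}=1\}$ is compact, and that the Krasnoselskii genus $\gamma(F_k)\geq k$. Symmetry is immediate: if $u=\sum_i\alpha_iu_i\in F_k$ then $-u=\sum_i(-\alpha_i)u_i$, and $(-\alpha_i)$ still satisfies $\sum_i|-\alpha_i|^p=1$. The fact that $F_k\subset W^{1,p}(\Omega)\setminus\{0\}$ follows because, since the supports $U_1,\dots,U_k$ are pairwise disjoint, for any nonzero coefficient vector $(\alpha_i)$ we have $\|\sum_i\alpha_iu_i\|_{W^{1,p}(\Omega)}^p=\sum_i|\alpha_i|^p\|u_i\|_{W^{1,p}(\Omega)}^p>0$ (at least one $\alpha_i$ is nonzero because $\sum_i|\alpha_i|^p=1$, and each $u_i\neq0$); the same disjointness gives $\|\sum_i\alpha_iu_i\|_{L^p(\Omega)}^p=\sum_i|\alpha_i|^p\|u_i\|_{L^p(\Omega)}^p$ and similarly on the boundary.

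For the genus lower bound, the key observation is that the map $\Phi:S_p^{k-1}:=\{\alpha\in\mathbb R^k:\sum_i|\alpha_i|^p=1\}\to F_k$ defined by $\Phi(\alpha)=\sum_i\alpha_iu_i$ is an odd homeomorphism onto $F_k$: it is continuous and odd, it is onto by definition of $F_k$, and it is injective because disjointness of supports forces $\sum_i\alpha_iu_i=\sum_i\beta_iu_i$ to imply $\alpha_iu_i=\beta_iu_i$ on each $U_i$, hence $\alpha_i=\beta_i$ since $u_i\not\equiv0$; continuity of the inverse follows since $S_p^{k-1}$ is compact. Since $S_p^{k-1}$ is the boundary of a symmetric, bounded, open, star-shaped neighborhood of the origin in $\mathbb R^k$, it is homeomorphic (via an odd homeomorphism, e.g.\ radial rescaling) to the standard sphere $S^{k-1}$, whose genus is exactly $k$ by the Borsuk--Ulam theorem. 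The genus is invariant under odd homeomorphisms, so $\gamma(F_k)=\gamma(S^{k-1})=k$, which in particular gives $\gamma(F_k)\geq k$.

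For compactness of $F_k\cap\{u:\|u\|_{L^p(\partial\Omega)}=1\}$, I would note that by the disjoint-support computation, for $u=\Phi(\alpha)\in F_k$ one has $\|u\|_{L^p(\partial\Omega)}^p=\sum_i|\alpha_i|^p\|u_i\|_{L^p(\partial\Omega)}^p$, so this set is the image under the continuous map $\Phi$ of the set $\{\alpha\in S_p^{k-1}:\sum_i|\alpha_i|^p\|u_i\|_{L^p(\partial\Omega)}^p=1\}$, which is a closed (hence compact) subset of the compact sphere $S_p^{k-1}$; the continuous image of a compact set is compact. (If some $u_i$ vanishes on $\partial\Omega$ one should restrict attention to the directions where the constraint is attainable, but generically each $\|u_i\|_{L^p(\partial\Omega)}>0$ and the set is nonempty.) I do not expect a serious obstacle here; the only point requiring a little care is the genus computation, namely justifying that $\gamma$ is a homeomorphism invariant and that $\gamma(S^{k-1})=k$, which are standard facts about the Krasnoselskii genus that can be cited from the references already in the paper.
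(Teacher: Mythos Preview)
Your proof is correct and follows essentially the same route as the paper: both arguments exhibit an odd homeomorphism between $F_k$ and the $\ell^p$-unit sphere $S_p^{k-1}\subset\mathbb R^k$ (the paper writes it as $f_k:F_k\to\mathbb S^{k-1}_p$, you write the inverse $\Phi$), invoke invariance of the Krasnoselskii genus under odd homeomorphisms, and conclude $\gamma(F_k)=\gamma(S^{k-1})=k$ via Borsuk--Ulam. Your parenthetical worry about some $u_i$ vanishing on $\partial\Omega$ is unnecessary, since the intersection $F_k\cap\{\|u\|_{L^p(\partial\Omega)}=1\}$ is in any case the continuous image of a closed subset of the compact set $S_p^{k-1}$, hence compact (possibly empty, which is still compact).
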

\begin{proof}
Clearly $0\notin F_k$. Moreover, $F_k\cap\left\{u:\|u\|_{L^p(\partial\Omega)}=1 \right\}$ is compact and  $F_k$ is symmetric. We show now that $\gamma(F_k)=k$. We define a map $f_k:F_k\rightarrow\mathbb R^k\setminus\left\{0\right\}$ by setting, for $u\in F_k$, $u=\sum_{i=1}^k\alpha_i u_i$,
$$
f_k(u)=\sum_{i=1}^k\alpha_i e_i^k,
$$
where $e_i^k$, $i=1,...,k$, denotes the standard basis of $\mathbb R^k$. 
The function $f_k$ is an  odd homeomorphism between $F_k$ and $\mathbb S^{k-1}_p:=\left\{x\in\mathbb R^k:\sum_{i=1}^k|x_i|^p=1\right\}$, which is the unit sphere of $\mathbb R^k$ with respect to the $\ell^p$ norm. This implies that $\gamma(F_k)=\gamma(\mathbb S^{k-1}_p)$ (see also \cite[Proposition 2.3]{szulkin}). Finally, by the Borsuk-Ulam Theorem we deduce that $\gamma(\mathbb S^{k-1}_p)=k$.
\end{proof}

We recall now the main technical tools which will be used to prove upper bounds for eigenvalues. We denote by $(X,{\rm dist},\varsigma)$ a metric measure space with a metric ${\rm dist}$ and a Borel measure $\varsigma$. We will call {\it capacitor} every couple $(A,D)$ of Borel sets of $X$ such that $A\subset D$. By an annulus in $X$ we mean any set $A\subset X$ of the form
\begin{equation*}
A=A(a,r,R)=\left\{x\in X:r<{\rm dist}(x,a)<R\right\},
\end{equation*}
where $a\in X$ and $0\leq r<R<+\infty$. By $2A$ we denote 
\begin{equation*}
2A=2A(a,r,R)=\left\{x\in X:\frac{r}{2}<{\rm dist}(x,a)<2R\right\}.
\end{equation*}

The following theorem provides a decomposition of a metric measure space by disjoint capacitors satisfying suitable measure conditions.
\begin{thm}[{\cite[Theorem 1.1]{gny}}]\label{gny}
Let $(X,{\rm dist},\varsigma)$ be a metric-measure space with $\varsigma$ a non-atomic finite  Borel measure. Assume that the following properties are satisfied:
\begin{enumerate}[i)]
\item there exists a constant $\Gamma$ such that any metric ball of radius $r$ can be covered by at most $\Gamma$ balls of radius $\frac{r}{2}$;
\item all metric balls in $X$ are precompact sets.
\end{enumerate}
Then for any integer $k$ there exists a sequence $\left\{A_i\right\}_{i=1}^k$ of $k$ annuli in $X$ such that, for any $i=1,...,k$
\begin{equation*}
\varsigma(A_i)\geq c\frac{\varsigma(X)}{k},
\end{equation*}
and the annuli $2A_i$ are pairwise disjoint. The constant $c$ depends only on the constant $\Gamma$ in i).
\end{thm}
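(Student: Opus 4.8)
The statement to be proved is exactly Theorem~\ref{gny}, the annulus-decomposition result of Grigor'yan--Netrusov--Yau. The strategy is the classical one: first produce a decomposition of $(X,{\rm dist},\varsigma)$ into $k$ \emph{capacitors} $(A_i,D_i)$ with $A_i\subset D_i$, the $D_i$ pairwise disjoint, and $\varsigma(A_i)\gtrsim \varsigma(X)/k$; then upgrade each abstract capacitor to an actual annulus $2A_i$ using the metric structure. The core of the argument is a stopping-time / dyadic-covering decomposition whose combinatorics are governed solely by the covering constant $\Gamma$ in hypothesis i).

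\textbf{Step 1: the key combinatorial lemma (decomposition into capacitors).} First I would establish the following: under hypotheses i) and ii), for every $N\in\mathbb N$ there exist $N$ capacitors $(A_i,D_i)$, $i=1,\dots,N$, with $D_i$ pairwise disjoint and $\varsigma(A_i)\ge c_0\,\varsigma(X)/N$, where each pair $(A_i,D_i)$ is of one of two types: either $A_i$ is a metric ball and $D_i$ a concentric ball of twice the radius, or $A_i$ is an annulus $A(a_i,r_i,R_i)$ and $D_i=2A(a_i,r_i,R_i)$. The proof proceeds by a Calder\'on--Zygmund-type stopping argument: fix a large parameter $\lambda$ (to be chosen depending on $\Gamma$); cover $X$ by balls at dyadically decreasing scales using i); at each scale keep track of how much $\varsigma$-mass sits inside balls whose mass exceeds a threshold. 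A ball whose mass is "too concentrated" compared to a slightly larger concentric ball is discarded and its annular complement recorded as a capacitor of annulus type; a ball that is "spread out" at the right scale is recorded as a capacitor of ball type. The bounded-covering hypothesis i) is what forces the number of overlapping balls at each scale to be controlled, so that after iterating one still has a definite proportion of the total mass distributed among disjoint pieces. Precompactness ii) guarantees the stopping procedure terminates and that the recorded sets are honest Borel sets with the claimed measures; non-atomicity of $\varsigma$ is used to subdivide a single heavy piece into several lighter ones of comparable mass when needed, which is how one passes from "some bounded number of capacitors" to "exactly $N$".

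\textbf{Step 2: from capacitors to annuli, and bookkeeping.} Once Step 1 is in place, the ball-type capacitors $(B(a,\rho),B(a,2\rho))$ are already of the form $(A(a,0,\rho),\,2A(a,0,\rho))$ up to reading a ball as an annulus with inner radius $0$, so in fact every capacitor produced is (or can be taken to be) an annulus together with its dilate $2A_i$. One then checks that disjointness of the $D_i=2A_i$ is exactly what the stopping construction delivers at the level of the enclosing balls, and that $\varsigma(A_i)\ge c\,\varsigma(X)/k$ with $c=c(\Gamma)$ by tracking constants through the iteration (the number of scales discarded, the covering multiplicity $\Gamma$, and the subdivision in the non-atomic step each contribute a factor depending only on $\Gamma$). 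Applying Step 1 with $N=k$ finishes the proof.

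\textbf{Main obstacle.} The delicate point is Step 1: making the stopping-time decomposition produce \emph{pairwise disjoint} enlarged sets $2A_i$ while still capturing a fixed fraction of $\varsigma(X)$, with all constants depending on $\Gamma$ alone and not on any doubling constant of $\varsigma$ or on the geometry of $X$. The tension is that enlarging $A_i$ to $2A_i$ can create overlaps; resolving it requires choosing the dyadic threshold and the gap between "kept" and "enlarged" scales carefully, and invoking i) to bound how many enlarged annuli can meet at a point. I expect this to be the bulk of the work; the reduction of Step 2 and the final constant-chasing are routine by comparison. (I would either carry out this combinatorial construction in detail or, if space is tight, cite \cite{gny} for the argument and present here only the reduction and the role of hypotheses i)--ii).)
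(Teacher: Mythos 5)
This statement is not proved in the paper at all: it is imported verbatim as \cite[Theorem 1.1]{gny}, and the paper's ``proof'' is simply the citation (together with Corollary \ref{corollarygny0}, also taken from \cite[Remark 3.13]{gny}). Measured against that, your proposal in its fallback form (``cite \cite{gny} and explain the role of hypotheses i)--ii)'') coincides with what the paper does, and your reading of the hypotheses is sound: i) is the only source of the constant $c$, ii) gives compactness for the selection procedure, and non-atomicity is what lets one split mass into $k$ comparable pieces.

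However, as an actual proof your text has a genuine gap, and it sits exactly where you place the ``main obstacle.'' Step 1 \emph{is} the content of the theorem, and your description of it --- a Calder\'on--Zygmund-type stopping argument over dyadic scales with a concentration threshold --- is an outline, not an argument: you never specify the threshold, the selection rule, or why the procedure captures a fixed fraction of $\varsigma(X)$ in \emph{each} of $k$ pieces (rather than a fixed fraction in total), and you never verify that the doubled sets $2A_i$ can be kept disjoint while retaining the lower bound $\varsigma(A_i)\geq c\,\varsigma(X)/k$ with $c=c(\Gamma)$ only, independent of any doubling property of $\varsigma$. The actual argument of Grigor'yan--Netrusov--Yau is not a routine dyadic decomposition: it rests on a dichotomy built from the function $r\mapsto\sup_x\varsigma(B(x,r))$ (which is also what underlies the radius estimate of Corollary \ref{corollarygny0}), distinguishing the case in which one can find $k$ well-separated balls of mass $\gtrsim\varsigma(X)/k$ from the case in which mass concentrates and one must grow genuine annuli by an inductive construction; the covering constant $\Gamma$ enters through a packing bound on how many selected balls of comparable radius can meet a fixed enlarged ball. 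None of this is recoverable from the sketch as written, so Step 1 would have to be carried out in full (essentially reproducing \cite{gny}) or, as the paper does, simply cited.
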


Theorem \ref{gny} provides a decomposition of a metric measure space by annuli of the size at least $c\frac{\varsigma(X)}{k}$. The common idea of the proof of inequalities \eqref{ineq_sup} and \eqref{ineq_sub} is to build for each $k\in\mathbb N$, suitable test functions $u_i$ supported on $2A_i$ and such that $u_i\equiv 1$ on $A_i$, and then to compute their Rayleigh quotients.

We also state a useful (but somehow hidden in the original paper \cite{gny}) corollary of Theorem \ref{gny} which gives a lower bound of the inner radius of the annuli of the decomposition, see \cite[Remark\,3.13]{gny}.
\begin{cor}\label{corollarygny0}
Let the assumptions of Theorem \ref{gny} hold. Then each annulus $A_i$ has either internal radius $r_i$ such that
\begin{equation}\label{gny-rad-est}
r_i\geq\frac{1}{2}\inf\left\{r\in\mathbb R:V(r)\geq v_k\right\},
\end{equation}
where $V(r):=\sup_{x\in X}\varsigma(B(x,r))$ and $v_k=c\frac{\varsigma(X)}{k}$ , or is a ball of radius $r_i$ satisfying \eqref{gny-rad-est}.
\end{cor}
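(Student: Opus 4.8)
The plan is to derive \eqref{gny-rad-est} from the construction carried out in the proof of Theorem \ref{gny}, by combining an elementary monotonicity property of the function $V$ with one extra quantitative fact about the radii of the annuli that this construction produces.

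First one records that $V$ is non-decreasing: for $0<r<r'$ and every $x\in X$ we have $B(x,r)\subset B(x,r')$, hence $\varsigma(B(x,r))\le\varsigma(B(x,r'))$, and taking the supremum over $x$ gives $V(r)\le V(r')$. Thus $\{r>0:V(r)\ge v_k\}$ is a half-line with left endpoint $\rho_k:=\inf\{r:V(r)\ge v_k\}$, so that \emph{whenever some ball $B(x,t)$ satisfies $\varsigma(B(x,t))\ge v_k$, one has $V(t)\ge v_k$ and therefore $t\ge\rho_k$}. Granting this, the statement reduces to the following claim about the decomposition $\{A_i\}_{i=1}^k$ of Theorem \ref{gny}: for each $i$, if $A_i=B(a_i,r_i)$ is a ball then $\varsigma(B(a_i,r_i))\ge v_k$, while if $A_i=A(a_i,r_i,R_i)$ is a genuine annulus then $\varsigma(B(a_i,2r_i))\ge v_k$. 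Indeed, in the ball case this gives $r_i\ge\rho_k\ge\frac12\rho_k$, and in the annulus case $2r_i\ge\rho_k$, i.e.\ $r_i\ge\frac12\rho_k$, which is exactly \eqref{gny-rad-est}.

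The ball case of the claim is immediate, since then $\varsigma(B(a_i,r_i))=\varsigma(A_i)\ge c\,\varsigma(X)/k=v_k$ by the conclusion of Theorem \ref{gny}. For a genuine annulus one has to look into the recursive subdivision of \cite{gny}: each annulus is cut around a center $a_i$, and its inner radius $r_i$ is chosen — up to the same dilation factor $2$ that turns $A_i$ into $2A_i$ — essentially as the smallest radius at which the concentric ball already carries mass $\ge v_k$, so that $\varsigma(B(a_i,2r_i))\ge v_k$ even though $\varsigma(B(a_i,r_i))$ may still be below $v_k$. A transparent special instance is when the selected annuli have bounded eccentricity $R_i\le 2r_i$, since then $B(a_i,2r_i)\supset B(a_i,R_i)\supset A_i$ and hence $\varsigma(B(a_i,2r_i))\ge\varsigma(A_i)\ge v_k$. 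The one genuinely delicate step — and the main obstacle — is to verify this radius property while remaining consistent with the two requirements of Theorem \ref{gny}, namely the mass lower bound $\varsigma(A_i)\ge c\,\varsigma(X)/k$ and the pairwise disjointness of the doubled annuli $2A_i$; this bookkeeping is precisely what is carried out in \cite[Remark 3.13]{gny}, to which we refer for the details, the remainder being the soft reduction above.
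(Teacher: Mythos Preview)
Your proposal is correct and matches the paper's own treatment: the paper gives no independent proof of this corollary but simply refers to \cite[Remark~3.13]{gny}, and your argument ultimately does the same for the crucial annulus step, adding only the elementary monotonicity of $V$ and the immediate ball case. These extra soft reductions are helpful clarifications but do not constitute a different approach.
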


\section{Proof of the main result}\label{proofs}

In this section we present the proof of Theorem \ref{main}.
\begin{proof}[Proof of Theorem \ref{main}]
We take the metric measure-space $(\mathbb R^n,d,\mu)$, where $d(x,y)=|x-y|$ is the Euclidean distance and the measure $\mu$ is defined by setting $\mu(E)=\int_{\partial\Omega\cap E}d\sigma(x)=|E\cap\partial\Omega|$ for an open set $E$. Note that $\mu$ is a non-atomic measure and $\mu(\mathbb R^n)=|\partial\Omega|$. It follows from Theorem \ref{gny} that, for any $k\in\mathbb N$, there exists $A_1,...,A_{2k}$ annuli in $\mathbb R^n$ with
\begin{equation}\label{cond_den}
\mu(A_i)\geq c_n\frac{\mu(\mathbb R^n)}{2k}=c_n\frac{|\partial\Omega|}{2k},
\end{equation}
and such that $2A_i$ are pairwise disjoint. The constant $c_n$ depends only on $n$. By possibly re-ordering the annuli, we have that
\begin{equation}\label{cond_num}
|2A_i\cap\Omega|\leq\frac{|\Omega|}{k}
\end{equation}
for $i=1,...,k$ (in fact we cannot have more than $k$ disjoint annuli with $|2A_i\cap\Omega|\geq\frac{|\Omega|}{k}$). Associated with each $A_i=A_i(a_i,r_i,R_i)$ we define a function $u_i$ by setting
\begin{equation}\label{ui}
u_i(x)=
\begin{cases}
1\,, & r_i\leq |x-a_i| \leq R_i,\\
\frac{2|x-a_i|}{r_i}-1\,, & \frac{r_i}{2}\leq |x-a_i|\leq r_i\,,\\
2-\frac{|x-a_i|}{R_i}\,, & R_i\leq |x-a_i|\leq 2R_i\,,\\
0\,, & {\rm otherwise}. 
\end{cases}
\end{equation}
In the case that $A_i$ is a ball of radius $r_i$ and center $a_i$, the function $u_i$ is defined by setting
\begin{equation}\label{ui2}
u_i(x)=
\begin{cases}
1\,, & |x-a_i| \leq r_i,\\
2-\frac{|x-a_i|}{r_i}\,, & r_i\leq |x-a_i|\leq 2r_i\,,\\
0\,, & {\rm otherwise}. 
\end{cases}
\end{equation}
Note that $u_i\in W^{1,p}(\Omega)$, $u_i$ is supported on $2A_i$ and $u_i\equiv 1$ on $A_i$. Let us take
$$
F_k:=\left\{\sum_{i=1}^k\alpha_iu_i:\alpha_i\in\mathbb R,\sum_{i=1}^k|\alpha_i|^p=1\right\}.
$$

From \eqref{minmax} and from Lemma \ref{disjoint_lem} we deduce that
\begin{equation}\label{minmax12}
\sigma_{p,k}\leq\sup_{u\in F_k}\mathcal R_p(u),
\end{equation}
which in particular implies, since $u_i$ are disjointly supported, that
\begin{equation}\label{minmax2}
\sigma_{p,k}\leq\max_{i=1,...,k}\mathcal R_p(u_i).
\end{equation}
Thus, in order to estimate $\sigma_{p,k}$ it is sufficient to estimate the Rayleigh quotients $\mathcal R_p(u_i)$ for $i=1,...,k$. We distinguish now the cases $p\leq n$ and $p>n$.

{\bf Case $p\leq n$.} We have, for the numerator
\begin{equation}\label{num}
\int_{\Omega}|\nabla u_i|^pdx\leq\left(\int_{\Omega}|\nabla u_i|^ndx\right)^{\frac{p}{n}}|2A_i\cap\Omega|^{1-\frac{p}{n}}\leq C_n^p\left(\frac{|\Omega|}{k}\right)^{1-\frac{p}{n}},
\end{equation}
where we have used \eqref{cond_num} and the fact that $|\nabla u_i|$ equals $\frac{2}{r_i}$ for $\frac{r_i}{2}\leq |x-a_i|\leq r_i$, $\frac{1}{R_i}$ for $R_i\leq |x-a_i|\leq 2R_i$ (and it is $\frac{1}{r_i}$ for $r_i\leq |x-a_i|\leq 2r_i$ when $A_i$ is a ball). In fact, an easy computation shows that $\left(\int_{\Omega}|\nabla u_i|^ndx\right)^{\frac{1}{n}}\leq (2n\omega_n)^{\frac{1}{n}}=:C_n$.

As for the denominator, we have

\begin{equation}\label{den}
\int_{\partial\Omega}|u_i|^pd\sigma(x)\geq\int_{A_i\cap\partial\Omega}|u_i|^pd\sigma(x)=\mu(A_i)\geq c_n\frac{|\partial\Omega|}{2k},
\end{equation}
where we have used the fact that $u_i\equiv 1$ on $A_i$ and \eqref{cond_den}. From \eqref{num} and \eqref{den} we deduce that
\begin{equation}\label{end_case1}
\frac{\int_{\Omega}|\nabla u_i|^pdx}{\int_{\partial\Omega}|u_i|^pd\sigma(x)}\leq\frac{2C_n^p}{c_n}\frac{|\Omega|^{1-\frac{p}{n}}}{|\partial\Omega|}k^{\frac{p}{n}}\leq \frac{C_{p,n}}{I(\Omega)^{\frac{n-p}{n-1}}}\frac{k^{\frac{p}{n}}}{|\partial\Omega|^{\frac{p-1}{n-1}}},
\end{equation}
where $C_{p,n}=2c_n^{-1}C_n^p$. This concludes the case $p\leq n$.

{\bf Case $p>n$.} We estimate the Rayleigh quotient of the same functions $u_i$ used in the case $p\leq n$, but in a different fashion (at least, for the numerator).   We have 
\begin{equation}\label{num2}
\int_{\Omega}|\nabla u_i|^pdx\leq\|\nabla u_i\|_{L^{\infty}(\Omega)}^{p-n}\int_{\Omega}|\nabla u_i|^ndx\leq 2n\omega_n\|\nabla u_i\|_{L^{\infty}(\Omega)}^{p-n}\leq \frac{2^{1+p-n}n\omega_n}{r_i^{p-n}}.
\end{equation}
From Corollary \ref{corollarygny0} we deduce that $r_i\geq\frac{1}{2}\inf\left\{r\in\mathbb R\: V(r)\geq c_n\frac{|\partial\Omega|}{2k}\right\}$, where  $V(r):=\sup_{x\in \mathbb R^n}\mu(B(x,r))$. From this and from the definition of $D(\Omega)$ we deduce that
\begin{equation}\label{up_ri}
r_i\geq\frac{1}{2}\left(\frac{c_n|\partial\Omega|}{2k\omega_n D(\Omega)}\right)^{\frac{1}{n-1}}.
\end{equation} 
Since for the denominator of the Rayleigh quotient the estimate \eqref{den} holds, from \eqref{den}, \eqref{num2} and \eqref{up_ri} we conclude that
\begin{equation}
\frac{\int_{\Omega}|\nabla u_i|^pdx}{\int_{\partial\Omega}|u_i|^pd\sigma(x)}\leq C_{p,n}'D(\Omega)^{\frac{p-n}{n-1}}\left(\frac{k}{|\partial\Omega|}\right)^{\frac{p-1}{n-1}},
\end{equation}
where $C_{p,n}'=2^{\frac{n(2p-2n+3)-2-p}{n-1}}c_n^{-\frac{p-1}{n-1}}n\omega_n$. This and \eqref{minmax2} allow to conclude the proof.

\end{proof}

We note that for a convex set $\mathcal D(\Omega)\leq \frac{n\omega_n}{\omega_{n-1}}$. In fact
$$
\frac{|\partial\Omega\cap B(x,r)|}{\omega_{n-1}r^{n-1}}\leq\frac{|\partial(\Omega\cap B(x,r))|}{\omega_{n-1}r^{n-1}}\leq\frac{|\partial B(x,r)|}{\omega_{n-1}r^{n-1}}=\frac{n\omega_n}{\omega_{n-1}}.
$$
We have used the fact that if $K_1,K_2$ are convex domains with $K_1\subseteq K_2$, then $|\partial K_1|\leq|\partial K_2|$. In this case $K_1=\Omega\cap B(x,r)$ and $K_2=B(x,r)$. Note that $\Omega\cap B(x,r)$ is convex being the intersection of two convex sets. We have the following corollary.

\begin{cor}\label{cor_convex}
Let $\Omega$ be a bounded and convex domain of $\mathbb R^n$ and let $p>n$. Then
\begin{equation}
\sigma_{p,k}\leq C_{p,n}''\left(\frac{k}{|\partial\Omega|}\right)^{\frac{p-1}{n-1}},
\end{equation}
where $C_{p,n}''>$ depends only on $p$ and $n$.
\end{cor}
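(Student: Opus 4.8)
The plan is to derive Corollary~\ref{cor_convex} as an immediate consequence of inequality~\eqref{ineq_sub} in Theorem~\ref{main} together with the convexity bound $D(\Omega)\leq \frac{n\omega_n}{\omega_{n-1}}$ established just above the statement. First I would invoke \eqref{ineq_sub}: for $p>n$ and $\Omega$ a bounded Lipschitz domain (in particular a bounded convex domain has Lipschitz boundary),
\begin{equation*}
|\partial\Omega|^{\frac{p-1}{n-1}}\sigma_{p,k}\leq C_{p,n}'\,D(\Omega)^{\frac{p-n}{n-1}}k^{\frac{p-1}{n-1}}.
\end{equation*}
Then I would use convexity. Since $\Omega$ is convex, the computation displayed before the corollary gives $D(\Omega)\leq \frac{n\omega_n}{\omega_{n-1}}$. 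Because $p>n$, the exponent $\frac{p-n}{n-1}$ is strictly positive, so the map $t\mapsto t^{\frac{p-n}{n-1}}$ is increasing on $(0,\infty)$ and hence $D(\Omega)^{\frac{p-n}{n-1}}\leq \left(\frac{n\omega_n}{\omega_{n-1}}\right)^{\frac{p-n}{n-1}}$.

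Combining the two displays yields
\begin{equation*}
|\partial\Omega|^{\frac{p-1}{n-1}}\sigma_{p,k}\leq C_{p,n}'\left(\frac{n\omega_n}{\omega_{n-1}}\right)^{\frac{p-n}{n-1}}k^{\frac{p-1}{n-1}},
\end{equation*}
and dividing both sides by $|\partial\Omega|^{\frac{p-1}{n-1}}$ gives the claimed bound with
\begin{equation*}
C_{p,n}'':=C_{p,n}'\left(\frac{n\omega_n}{\omega_{n-1}}\right)^{\frac{p-n}{n-1}},
\end{equation*}
which depends only on $p$ and $n$ since $C_{p,n}'$ does and the extra factor is an explicit function of $p$ and $n$. (I would also note in passing the typo in the statement, $C_{p,n}''>$, intending $C_{p,n}''>0$.)

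There is essentially no obstacle here: the corollary is a direct specialization of Theorem~\ref{main} using a geometric inequality already proved in the text. The only point requiring a word of care is the monotonicity step, i.e. that the positivity of the exponent $\frac{p-n}{n-1}$ (equivalently $p>n$) is exactly what lets us pass from the upper bound $D(\Omega)\leq \frac{n\omega_n}{\omega_{n-1}}$ to an upper bound on $D(\Omega)^{\frac{p-n}{n-1}}$; were $p<n$ the inequality would reverse. Everything else is bookkeeping of constants.
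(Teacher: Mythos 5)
Your proposal is correct and follows exactly the paper's route: the corollary is obtained by inserting the convexity bound $D(\Omega)\leq \frac{n\omega_n}{\omega_{n-1}}$ (proved in the text just before the statement) into inequality \eqref{ineq_sub} of Theorem \ref{main}, using that the exponent $\frac{p-n}{n-1}$ is positive for $p>n$. Your remarks on the Lipschitz regularity of convex domains and on the typo $C_{p,n}''>$ are accurate but not substantive differences.
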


\section{Domains with fixed surface measure and arbitrarily large variational eigenvalues}\label{counter}

The aim of this section is to build a sequence $\{\Omega_j\}_{j\in\mathbb N}\subset\mathbb R^n$ of domains which satisfy $\lim_{j\rightarrow+\infty}|\partial\Omega_j|=C>0$ and $\sigma_{p,2}(\Omega_j)\rightarrow+\infty$ when $p>n$. Through all this section we shall denote by $\sigma_{p,2}(\Omega_j)$ the second variational eigenvalue of \eqref{pLap} on $\Omega_j$. The variational eigenvalue $\sigma_{p,2}(\Omega_j)$ is actually the second eigenvalue of \eqref{pLap} (recall that $\sigma_{p,1}(\Omega_j)=0$), and every eigenfunction associated with $\sigma_{p,2}(\Omega_j)$ changes its sign on $\partial\Omega$ (see \cite{rossi_bonder_trace,martinez_rossi} for details).

Let $\alpha,\beta>0$ two positive numbers satisfying $\beta>n$, $\alpha=\beta-n+1$ (in particular, $\alpha>1$) and let $j\in\mathbb N$. Let $Q_{j}:=\left(0,\frac{1}{j}\right)^n$ be the $n$-dimensional cube in $\mathbb R^n$ of side $\frac{1}{j}$. Let now $(i_1,...,i_{n-1})\in\left\{0,...,m(j)-1\right\}^{n-1}$ with $m(j)=\left[j^{\beta-1}\right]+1$, where $[\cdot]$ denotes the integer part of a real number. Let $Q_{i_1,...,i_{n-1}}$ be the $(n-1)$-dimensional cube defined by
\begin{multline}
Q_{i_1,...,i_{n-1}}:=
\Bigg\{(x_1,...,x_n)\in\mathbb R^n
\\: x_{\ell}\in \left(c_{i_{\ell}}-\frac{1}{2jm(j)},c_{i_{\ell}}+\frac{1}{2jm(j)}\right), \ell=1,...,n-1,{\rm \ and\ }x_n=\frac{1}{j} \Bigg\},
\end{multline}
where 
$$
c_{i_{\ell}}=\frac{1}{jm(j)}\left(\frac{1}{2}+i_{\ell}\right)\,,\ \ \ \ell=1,...,n-1.
$$
Namely, the cube $Q_{1_1,...,i_{n-1}}$ has center $c_{i_1,...,i_{n-1}}$ given by
\begin{multline*}
c_{i_1,...,i_{n-1}}=\left(c_{i_1},...,c_{i_{n-1}},\frac{1}{j}\right)
\\
=\left(\frac{1}{jm(j)}\left(\frac{1}{2}+i_1\right),...,\frac{1}{jm(j)}\left(\frac{1}{2}+i_{n-1}\right),\frac{1}{j}\right)
\end{multline*}
Note that $\overline Q_j\cap\left\{x_n=\frac{1}{j}\right\}=\bigcup_{i_1,...,i_{n-1}=0}^{m(j)-1}\overline Q_{i_1,...,i_{n-1}}$. Roughly speaking, we have decomposed the upper face $\overline Q_j\cap\left\{x_n=\frac{1}{j}\right\}$ of $Q_j$ as the union of $m(j)^{n-1}\sim j^{(n-1)(\beta-1)}$  $(n-1)$-dimensional cubes of side $\frac{1}{jm(j)}\sim\frac{1}{j^{\beta}}$.

Let now $P_{i_1,...,i_{n-1}}$ be the square pyramid with base $Q_{i_1,...,i_{n-1}}$ and height $\frac{1}{j^{\alpha}}$ such that the vertex of $P_{i_1,...,i_{n-1}}$ is $(c_{i_1},...,c_{i_{n-1}},\frac{1}{j}+\frac{1}{j^{\alpha}})$. We observe that
\begin{equation}\label{vol_pyr}
|P_{i_1,...,i_{n-1}}|=\frac{1}{nj^{\alpha+n-1}m(j)^{n-1}}\sim\frac{1}{nj^{\beta n-n+1}},
\end{equation}
\begin{equation}\label{surf_pyr}
|\partial P_{i_1,...,i_{n-1}}|-|Q_{i_1,...,i_{n-1}}|=\frac{2}{(jm(j))^{(n-2)}}\cdot\sqrt{\frac{1}{j^{2\alpha}}+\frac{1}{4(jm(j))^2}}\sim\frac{2}{j^{(\beta-1)(n-1)}}
\end{equation}
and
\begin{equation}
{\rm diam}P_{i_1,...,i_{n-1}}=\sqrt{\frac{2}{4(jm(j))^2}+\frac{1}{j^{2\alpha}}}\leq\frac{\sqrt{n+4}}{2j^{\alpha}}=\frac{\sqrt{n+4}}{2j^{\beta-n+1}},
\end{equation}
where ${\rm diam}D$ denotes the diameter of a set $D$.

We finally define

$$
\Omega_j:={\rm Int}\left(\overline Q_j\cup\bigcup_{i_1,...,i_{n-1}=0}^{m(j)-1}\overline P_{i_1,...,i_{n-1}}\right),.
$$
where ${\rm Int}$ denotes the interior.  Roughly speaking, $\Omega_j$ is a $n$-dimensional cube of side $\frac{1}{j}$ with $m(j)^{n-1}\sim j^{(n-1)(\beta-1)}$ pyramids on its upper face. By construction, $\Omega_j$ is a bounded Lipschitz domain for all $j\in\mathbb N$. From \eqref{vol_pyr} and \eqref{surf_pyr} we deduce that $|\Omega_j|=\frac{1}{j^n}+o(\frac{1}{j^n})$ as $j\rightarrow +\infty$ and $|\partial\Omega_j|=2+o(2)$ as $j\rightarrow+\infty$.

\begin{figure}
\includegraphics[width=0.7\textwidth]{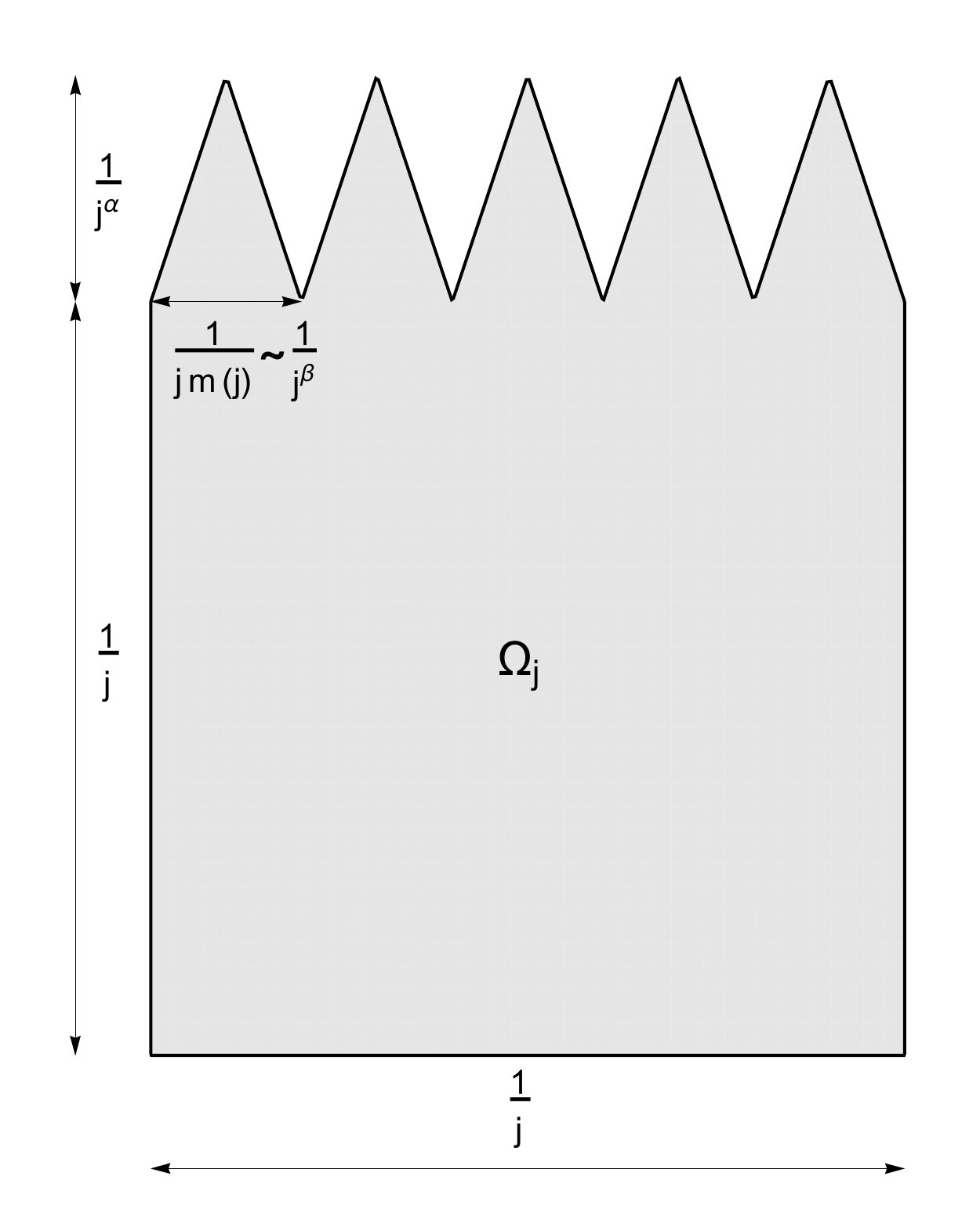}
\caption{The domain $\Omega_j$ when $n=2$.}
\end{figure}

We will prove the following theorem.

\begin{thm}\label{thm_counter}
For $p>n$ we have
\begin{equation}\label{ineq_counter}
\sigma_{p,2}(\Omega_j)\geq C(p,n) j^{p-n},
\end{equation}
where $C(p,n)>0$ depends only on $p$ and $n$.
\end{thm}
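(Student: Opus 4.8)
The plan is to obtain a lower bound on $\sigma_{p,2}(\Omega_j)$ by exploiting the variational characterization \eqref{minmax} from below, which requires controlling \emph{all} admissible test families, not just one. The standard device here is the following: since the eigenfunction associated with $\sigma_{p,2}$ changes sign on $\partial\Omega_j$, for any $u\in W^{1,p}(\Omega_j)$ orthogonal (in the appropriate nonlinear sense) to the constants, one has a Poincar\'e-type inequality $\int_{\partial\Omega_j}|u|^p\,d\sigma\le \sigma_{p,2}^{-1}\int_{\Omega_j}|\nabla u|^p\,dx$ available only \emph{after} the fact; to go the other way I would instead argue directly. Concretely, I would show that there is a constant $c=c(p,n)>0$ such that for every $u\in W^{1,p}(\Omega_j)$ with $\int_{\partial\Omega_j}|u|^{p-2}u\,d\sigma=0$,
\begin{equation*}
\int_{\partial\Omega_j}|u|^p\,d\sigma\le c\, j^{n-p}\int_{\Omega_j}|\nabla u|^p\,dx,
\end{equation*}
since the second variational eigenvalue dominates this Rayleigh-type quotient restricted to the ``mean-zero'' class (this is the min characterization of $\sigma_{p,2}$ as the infimum of $\mathcal R_p$ over functions with vanishing weighted mean — see \cite{martinez_rossi}). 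So the real task is a geometric/functional-analytic estimate on $\Omega_j$.

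The key steps, in order, are as follows. \textbf{Step 1: reduce to a single pyramid plus the cube.} The boundary $\partial\Omega_j$ splits into the boundary of the base cube $Q_j$ minus the top face, plus the union of the lateral boundaries $\partial P_{i_1,\dots,i_{n-1}}\setminus Q_{i_1,\dots,i_{n-1}}$ of the pyramids. Most of the boundary measure ($2+o(1)$ of the total, versus $O(j^{-(n-1)})$ on the cube walls) sits on the pyramids. \textbf{Step 2: a trace/Poincar\'e inequality on one pyramid.} For a single pyramid $P=P_{i_1,\dots,i_{n-1}}$, whose diameter is $\sim j^{-\alpha}=j^{-(\beta-n+1)}$ and whose base has side $\sim j^{-\beta}$, I would prove a scaled trace inequality of the form
\begin{equation*}
\int_{\partial P\setminus Q_P}|u-\bar u_P|^p\,d\sigma\le C(p,n)\,(\mathrm{diam}\,P)^{?}\,|P|^{?}\int_{P}|\nabla u|^p\,dx,
\end{equation*}
where $\bar u_P$ is a suitable average of $u$ over $P$ (or over its base face); the exponents come from scaling a fixed reference pyramid, using that the trace operator $W^{1,p}\hookrightarrow L^p(\partial)$ is bounded for $p>n$ (indeed for $p>n$, $W^{1,p}$ embeds into $C^0$, which makes the trace estimate robust and the constants dimensionally transparent). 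Tracking the powers of $j$ carefully via the dilation from the unit-size model pyramid — base side $1/(jm(j))\sim j^{-\beta}$, height $j^{-\alpha}$ — should produce exactly the factor $j^{n-p}$ after summing. \textbf{Step 3: glue the pyramids through the cube.} The averages $\bar u_P$ over different pyramids are tied together because each pyramid's base lies on the top face of $Q_j$, and $u$ restricted to $Q_j$ has controlled oscillation: a Poincar\'e inequality on the cube $Q_j$ (side $1/j$) gives $\int_{Q_j}|u-\bar u_{Q_j}|^p\le C j^{-p}\int_{Q_j}|\nabla u|^p$, and a trace estimate on the top face transfers this to control $\sum_P |\bar u_P - \bar u_{Q_j}|^p\,|\partial P\setminus Q_P|$. \textbf{Step 4: use mean-zero.} Combining Steps 2 and 3 and invoking $\int_{\partial\Omega_j}|u|^{p-2}u\,d\sigma=0$ to replace $\bar u_{Q_j}$ by $0$ at acceptable cost (via the elementary inequality $|a+b|^p\le 2^{p-1}(|a|^p+|b|^p)$ and absorbing, plus the fact that the total boundary measure is bounded), one arrives at $\int_{\partial\Omega_j}|u|^p\,d\sigma\le c\,j^{n-p}\int_{\Omega_j}|\nabla u|^p\,dx$, which is the claim with $C(p,n)=1/c$.

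The main obstacle I expect is \textbf{Step 2 combined with the bookkeeping in Step 3}: getting the scaling exponents exactly right so that the geometric parameters $\alpha=\beta-n+1$ and $m(j)\sim j^{\beta-1}$ conspire to yield precisely $j^{p-n}$ (and not some other power) — in particular one must verify that the contribution of each pyramid's lateral surface, namely $\sim j^{-(\beta-1)(n-1)}$ per pyramid times $m(j)^{n-1}\sim j^{(\beta-1)(n-1)}$ pyramids, combines with the per-pyramid Dirichlet-energy constant $\sim (\mathrm{diam}\,P)^{p-n}\sim j^{-(\beta-n+1)(p-n)}$ and the Poincar\'e constant on $Q_j$ to leave exactly $j^{n-p}$. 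It is precisely the condition $\alpha>1$ (equivalently $\beta>n$) that makes the height decay fast enough relative to the base so that the normalization $|\partial\Omega_j|\to 2$ holds while the pyramids become ``spiky'' enough to inflate $\sigma_{p,2}$; confirming that the same condition is what yields $\mathrm{diam}\,P\to 0$ at the rate making $(\mathrm{diam}\,P)^{p-n}$ dominate is the delicate point. A secondary subtlety is justifying that it suffices to test against mean-zero functions for a \emph{lower} bound on $\sigma_{p,2}$ in the $p$-Laplacian setting, which I would cite from \cite{martinez_rossi,rossi_bonder_trace}.
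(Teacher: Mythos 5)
Your overall strategy (reduce to a Poincar\'e--trace inequality on $\Omega_j$ for boundary-mean-zero functions, built from local estimates on the pyramids and the cube) could in principle be made to work, but as written it has a genuine gap exactly at the point you flag as the ``main obstacle''. Step 2 cannot be obtained by ``scaling a fixed reference pyramid'': the pyramids $P_{i_1,\dots,i_{n-1}}$ are not self-similar rescalings of a single model shape. Their base side is $\sim j^{-\beta}$ while their height is $\sim j^{-\alpha}=j^{-\beta+n-1}$, so the aspect ratio degenerates like $j^{n-1}$, and the trace/Poincar\'e constants on these increasingly thin convex spikes blow up with $j$ in a way that must be tracked quantitatively; this is the entire content of the argument, not a bookkeeping afterthought. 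The paper handles precisely this via the Morrey-type oscillation estimate on convex sets (Lemma \ref{morrey}), whose constant $(\mathrm{diam}\,D)^n/|D|\sim j^{(n-1)^2}$ records the degeneracy, and---crucially---via a choice of $\beta$ \emph{depending on $p$ and $n$}, namely $\beta=\frac{p-n(n+p-np)}{p-n}$, which is exactly what makes the spike contribution no worse than the cube contribution $j^{-(1-n/p)}$ and hence yields the rate $j^{p-n}$. Your proposal treats $\alpha,\beta$ as given and expects the exponents to ``conspire to yield precisely $j^{p-n}$'' for the construction as described; without fixing $\beta$ (large enough in terms of $p,n$) and without a quantitative substitute for Lemma \ref{morrey} on the degenerating pyramids, the key inequality $\int_{\partial\Omega_j}|u|^p\,d\sigma\le c\,j^{n-p}\int_{\Omega_j}|\nabla u|^p\,dx$ is not established.

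Two further remarks. First, your reduction step is stated too strongly: for $p\ne 2$ it is not known (and should not be asserted) that $\sigma_{p,2}$ \emph{equals} the infimum of $\mathcal R_p$ over $\{\int_{\partial\Omega_j}|u|^{p-2}u\,d\sigma=0\}$; what you actually need is only the inequality $\sigma_{p,2}\ge\inf$, which does hold because the second eigenfunction itself satisfies the constraint (test \eqref{pLap_weak} with $\phi\equiv1$ and use $\sigma_{p,2}>0$), and then $t=0$ minimizes $t\mapsto\int_{\partial\Omega_j}|u-t|^p\,d\sigma$ by convexity, which cleans up your Step 4. Second, note that the paper's route is substantially simpler than your Steps 2--4: since $p>n$, functions in $W^{1,p}(\Omega_j)$ are continuous, the eigenfunction for $\sigma_{p,2}$ changes sign on $\partial\Omega_j$ and hence vanishes at some point, so the oscillation bound on each convex piece gives directly $\|u\|_{L^\infty(\Omega_j)}^p\le C\,j^{n-p}\|\nabla u\|^p_{L^p(\Omega_j)}$, and the boundary integral is then trivially bounded by $|\partial\Omega_j|\,\|u\|_{L^\infty}^p$ --- no per-pyramid trace inequalities, averages, or gluing through the cube are needed.
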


Before proving Theorem \ref{thm_counter} we need to recall a few facts on Sobolev embeddings for $p>n$. We first recall that any $u\in W^{1,p}(\Omega)$ belongs to $C^{0,\gamma}(\overline\Omega)$, for some $\gamma>0$ (or, more precisely, any $u\in W^{1,p}(\Omega)$ has a representative in $C^{0,\gamma}(\overline\Omega)$). We also recall the following lemma, the proof of which can be carried out as in \cite[Lemmas 7.12, 7.16]{gitr}.

\begin{lem}\label{morrey}
Let $\Omega$ be a bounded domain of $\mathbb R^n$, $n\geq 2$, and let $p>n$. For any convex subset $D\subset\Omega$ and any $u\in W^{1,p}(\Omega)$, we have
\begin{equation}\label{convex_morrey}
|u(x)-u(y)|\leq C'(p,n)\frac{({\rm diam}D)^n}{|D|}\cdot ({\rm diam}D)^{1-\frac{n}{p}}\|\nabla u\|_{L^p(D)},
\end{equation}
where $C'(p,n)>0$ depends only on $p$ and $n$.
\end{lem}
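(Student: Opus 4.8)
The plan is to prove Lemma \ref{morrey} by following the classical Morrey argument exactly as presented in \cite[Lemmas 7.12, 7.16]{gitr}, making explicit the dependence of the constant on the diameter and volume of the convex set $D$. The key point is that convexity of $D$ allows one to connect any two points $x,y\in D$ by the segment $[x,y]\subset D$, and more generally to average over points of $D$ using rays.

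\textbf{Step 1: Integral representation.} For $x\in D$ and $z\in D$, write along the segment from $x$ to $z$
\begin{equation*}
u(x)-u(z)=-\int_0^{|x-z|}\frac{\partial}{\partial\rho}\,u\bigl(x+\rho\,\omega\bigr)\,d\rho,\qquad \omega=\frac{z-x}{|x-z|},
\end{equation*}
which is valid for (say) $u\in C^1(\overline\Omega)$ and then extends to $u\in W^{1,p}(\Omega)$ by density and the Hölder continuity recalled just before the lemma. Averaging this identity over $z\in D$ and dividing by $|D|$ gives
\begin{equation*}
\left|u(x)-\frac{1}{|D|}\int_D u(z)\,dz\right|\le \frac{1}{|D|}\int_D\int_0^{|x-z|}\bigl|\nabla u(x+\rho\omega)\bigr|\,d\rho\,dz.
\end{equation*}

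\textbf{Step 2: Switch to polar coordinates and estimate.} Write the $z$-integral in polar coordinates centered at $x$, with $\delta=\operatorname{diam}D$ so that $D\subset B(x,\delta)$; the radial variable ranges over $(0,\delta)$ and the angular variable over a subset of the unit sphere. Extending $\nabla u$ by zero outside $D$, one bounds the double integral by
\begin{equation*}
\frac{1}{|D|}\int_{B(x,\delta)}|\nabla u(w)|\,|w-x|^{1-n}\,dw,
\end{equation*}
a Riesz-potential type expression; here one uses the standard trick of pulling the radial weight $\rho^{n-1}$ from the volume element against the $\rho^{1-n}$ coming from the length of the ray. Applying Hölder's inequality with exponents $p$ and $p'=p/(p-1)$, the factor $|w-x|^{1-n}$ lies in $L^{p'}(B(x,\delta))$ precisely because $p>n$ forces $(n-1)p'<n$, and its $L^{p'}$ norm over $B(x,\delta)$ is $C(p,n)\,\delta^{1-n/p}$. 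This yields
\begin{equation*}
\left|u(x)-\frac{1}{|D|}\int_D u\right|\le C(p,n)\frac{\delta^{1-n/p}}{|D|}\,\|\nabla u\|_{L^p(D)}.
\end{equation*}
The factor $1/|D|$ here is a clean $|D|^{-1}$, whereas the statement has $(\operatorname{diam}D)^n/|D|$; since $\operatorname{diam}D\le\delta$ means $|D|\le\omega_n\delta^n$ is false in general but $(\operatorname{diam}D)^n/|D|\ge \omega_n^{-1}$ — so the stated bound is weaker by the harmless factor $\delta^n/|D|\ge\omega_n^{-1}$. Thus the cleaner estimate above a fortiori gives \eqref{convex_morrey}, after absorbing the dimensional constant.

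\textbf{Step 3: Conclude.} Apply the mean-value estimate of Step 2 to both $x$ and $y$ and use the triangle inequality:
\begin{equation*}
|u(x)-u(y)|\le 2C(p,n)\frac{(\operatorname{diam}D)^{1-n/p}}{|D|}\,\|\nabla u\|_{L^p(D)}\le C'(p,n)\frac{(\operatorname{diam}D)^n}{|D|}\,(\operatorname{diam}D)^{1-n/p}\,\|\nabla u\|_{L^p(D)},
\end{equation*}
the last step just throwing in the extra factor $(\operatorname{diam}D)^n/|D|\ge\omega_n^{-1}\ge$ (some constant) to match the form written in the lemma. I expect the only genuinely delicate point to be the rigorous justification of the integral representation and the density/extension argument for general $u\in W^{1,p}(\Omega)$ (as opposed to $u\in C^1$), together with keeping the track of which $L^{p'}$ integrability of the kernel $|w-x|^{1-n}$ is being used — but this is exactly the content of \cite[Lemmas 7.12, 7.16]{gitr}, so it can be cited. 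The bookkeeping of the power of $\operatorname{diam}D$ is then routine.
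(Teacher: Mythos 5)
Your route is the classical Morrey/Gilbarg--Trudinger argument that the paper itself cites (Lemmas 7.12 and 7.16 of \cite{gitr}), so the strategy is the right one; but Step 2 contains a genuine error. After averaging the segment representation over $z\in D$ you must pass to polar coordinates in the variable $z=x+r\omega$, whose volume element is $r^{n-1}\,dr\,d\omega$ with $r=|x-z|$ — and this $r$ is \emph{not} the variable $\rho$ at which $\nabla u$ is evaluated. To produce the Riesz kernel you write $d\rho=\rho^{1-n}\,\rho^{n-1}d\rho$ and recombine $\rho^{n-1}d\rho\,d\omega$ into $dw$; the weight $r^{n-1}$ coming from the $z$-integration is then left over and integrates to $\delta^n/n$ over $(0,\delta)$. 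The correct outcome of this step is
\begin{equation*}
\Bigl|u(x)-\tfrac{1}{|D|}\!\int_D u\Bigr|\;\le\;\frac{\delta^{n}}{n\,|D|}\int_{B(x,\delta)}|\nabla u(w)|\,|w-x|^{1-n}\,dw,
\end{equation*}
and Hölder then yields exactly the factor $(\mathrm{diam}\,D)^{n}/|D|$ of the lemma. Your claimed ``cleaner'' bound with a plain $1/|D|$ in front is false: it is dimensionally inconsistent, and e.g.\ for $u(x)=x_1$ on a ball of radius $\delta$ the left-hand side is of order $\delta$ while your right-hand side is of order $\delta^{1-n/p-n}\cdot\delta^{n/p}=\delta^{1-n}$, which is much smaller when $\delta$ is large. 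So the factor $(\mathrm{diam}\,D)^{n}/|D|$ is not a harmless weakening you can ``throw in'' at the end — it is where the content of the estimate lies, and it is precisely what makes the choice $\beta=\frac{p-n(n+p-np)}{p-n}$ in the application to the pyramids $P_{i_1,\dots,i_{n-1}}$ nontrivial.

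The fix is mechanical: carry the $\delta^{n}/(n|D|)$ factor through Step 2 (this is verbatim \cite[Lemma 7.16]{gitr}), estimate $\||x-\cdot|^{1-n}\|_{L^{p'}(B(x,\delta))}=C(p,n)\,\delta^{1-n/p}$ using $p>n$ (the relevant case of \cite[Lemma 7.12]{gitr}), and then your Step 3 triangle inequality gives \eqref{convex_morrey} as stated; the extension from $C^1$ to $W^{1,p}$ by density is fine as you say. (A minor aside: your parenthetical claim that ``$|D|\le\omega_n\delta^n$ is false in general'' is itself wrong — any set of diameter $\delta$ lies in a ball of radius $\delta$, and the isodiametric inequality even gives $|D|\le\omega_n(\delta/2)^n$ — but nothing in the argument hinges on this.)
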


We are ready to prove Theorem \ref{thm_counter}

\begin{proof}[Proof of Theorem \ref{thm_counter}]
Lemma \ref{morrey} says that for any $x,y\in \overline P_{i_1,...,i_{n-1}}$,
\begin{equation}\label{convex_P}
|u(x)-u(y)|\leq C''(p,n)j^{(n-1)^2}\cdot j^{(-\beta+n-1)\left(1-\frac{n}{p}\right)}\|\nabla u\|_{L^p(\Omega_j)},
\end{equation}
where we have used the fact that $\|\nabla u\|_{L^p(P_{i_1,...,i_{n-1}})}\leq \|\nabla u\|_{L^p(\Omega_j)}$. The constant $C''(p,n)$ is strictly positive when $p>n$ and depends only on $p$ and $n$ (it can be explicitly computed, see \cite[Lemmas 7.12, 7.16]{gitr}).
We choose now $\beta=\frac{p-n(n+p-np)}{p-n}$. We easily check that $(p-n)\beta-n(p-n)=p(n-1)^2$, so that $\beta>n$. Moreover, $j^{(n-1)^2}\cdot j^{(-\beta+n-1)\left(1-\frac{n}{p}\right)}=j^{-\left(1-\frac{n}{p}\right)}$, so that \eqref{convex_P} with this choice of $\beta$ reads
\begin{equation}\label{convex_P2}
|u(x)-u(y)|\leq C''(p,n)j^{-\left(1-\frac{n}{p}\right)}\|\nabla u\|_{L^p(\Omega_j)}.
\end{equation}
Analogously, for any $x,y\in \overline Q_j$, Lemma \ref{morrey} immediately implies that
\begin{equation}\label{convex_Q}
|u(x)-u(y)|\leq C''(p,n)j^{-\left(1-\frac{n}{p}\right)}\|\nabla u\|_{L^p(\Omega_j)},
\end{equation}
where we have possibly re-defined the constant $C''(p,n)$.

From the definition of $\Omega_j$, and from \eqref{convex_P2} and \eqref{convex_Q} we deduce that for any $x,y\in\overline\Omega_j$,
\begin{equation}\label{convex_PQ}
|u(x)-u(y)|\leq 3C''(p,n)j^{-\left(1-\frac{n}{p}\right)}\|\nabla u\|_{L^p(\Omega_j)}.
\end{equation}
If furthermore we assume that there exists a point $x_0\in\overline\Omega_j$ such that $u(x_0)=0$, we immediately deduce that
\begin{equation}\label{convex_PQ_final}
|u(x)|\leq 3C''(p,n)j^{-\left(1-\frac{n}{p}\right)}\|\nabla u\|_{L^p(\Omega_j)},
\end{equation}
for any $x\in\overline\Omega_j$. We have proved that, for any $u\in W^{1,p}(\Omega_j)$ such that $u(x_0)=0$ for some $x_0\in\overline\Omega_j$
\begin{equation}\label{estimate_Rayleigh}
\frac{\int_{\Omega_j}|\nabla u|^pdx}{\int_{\partial\Omega_j}|u|^pd\sigma(x)}\geq\frac{j^{p-n}}{|\partial\Omega_j|3^pC''(p,n)^p}.
\end{equation}
We recall that any eigenfunction associated with $\sigma_{p,2}(\Omega_j)$ changes sign on $\partial\Omega_j$. This fact, the variational characterization \eqref{pLap_weak}, and \eqref{estimate_Rayleigh} allow to deduce the validity of \eqref{ineq_counter} with the constant $C(p,n)>0$ depending only on $p$ and $n$.
\end{proof}

\begin{rem}
We remark that for $\Omega_j$, $D(\Omega_j)\geq 2 j^{n-1}$, thus proving the necessity of the constant $D(\Omega)$ in an upper bound for $\sigma_{p,k}$ and the sharpness of the exponent of $D(\Omega)$ in \eqref{ineq_sub}.
\end{rem}

\begin{rem}\label{rem_counter}
When rescaling $\Omega_j$ by a factor $j^{\eta}$, we obtain 
$$
\sigma_{p,2}(\Omega_j)=j^{\eta(p-1)}\sigma_{p,2}(j^{\eta}\Omega_j).
$$
From \eqref{ineq_counter} we deduce
\begin{equation}\label{ineq_counter_2}
\sigma_{p,2}(j^{\eta}\Omega_j)\geq C(p,n) j^{-\eta(p-1)+(p-n)}.
\end{equation}
We can choose now any $0<\eta\leq\frac{p-n}{p-1}$ so that the right-hand side of \eqref{ineq_counter_2} stays bounded away from zero as $j\rightarrow+\infty$.  Note also that $|j^{\eta}\Omega_j|=j^{n(\eta-1)}+o(j^{n(\eta-1)})$ and $|\partial(j^{\eta}\Omega_j)|=2j^{\eta(n-1)}+o(j^{\eta(n-1)})$ as $j\rightarrow+\infty$. Thus $j^{\eta}\Omega_j$ has the boundary measure which goes to infinity everywhere on (a part of) the boundary as $j\rightarrow+\infty$ if $0<\eta\leq\frac{p-n}{p-1}$, but the Steklov eigenvalues remain uniformly bounded away from zero.
\end{rem}

\section*{Acknowledgements}
The author is grateful to Bruno Colbois and Pier Domenico Lamberti for fruitful discussions on the subject, and for pointing out references \cite{colbois_discr} and \cite{arrieta_bruschi_1,dancer_daners}, respectively. The author is member of the Gruppo Nazionale per le Strutture Algebriche, Geometriche e le loro Applicazioni (GNSAGA) of the I\-sti\-tuto Naziona\-le di Alta Matematica (INdAM).

\bibliographystyle{abbrv}
\bibliography{bibliography}

\end{document}